\newtheorem{theorem}{Theorem}[section]
\newtheorem{corollary}[theorem]{Corollary}
\theoremstyle{definition}
\newtheorem{definition}[theorem]{Definition}
\theoremstyle{remark}
\newtheorem{remark}[theorem]{Remark}
\numberwithin{equation}{section}
\begin{document}
\setcounter{page}{1}

\title[Principal angles and approximation]
{Principal angles and approximation for quaternionic projections}

\author[T.~A.~Loring]{Terry A.~Loring$^1$}

\address{$^{1}$ Department of Mathematics and Statistics, University of New Mexico,
Albuquerque, NM 87131, USA.}
\email{\textcolor[rgb]{0.00,0.00,0.84}{loring@math.unm.edu}}

\dedicatory{This paper is dedicated to Professor Tsuyoshi Ando}

\subjclass[2010]{Primary 15B33; Secondary 46L05.}

\keywords{Principal angles, subspace, projection, almost commuting, antiunitary symmetry}

\date{June 2013 %Received: xxxxxx; Revised: yyyyyy; Accepted: zzzzzz.
\newline \indent $^{1}$ Corresponding author}

\begin{abstract}
We extend Jordan's notion of principal angles to work for two subspaces
of quaternionic space, and so have a method to analyze two orthogonal
projections in $\mathbf{M}_{n}(\mathbb{A})$ for $\mathbb{A}$ the
real, complex or quaternionic field (or skew field). From this we derive
an algorithm to turn almost commuting projections into commuting projections
that minimizes the sum of the displacements of the two projections.
We quickly prove what we need using the universal real C{*}-algebra
generated by two projections.
\end{abstract} \maketitle

\section{Two projections, the three-fold way}

The general form of two projections on complex Hilbert space is well
know, going back to at least Dixmier \cite{DixmierTwoSubspaces}.
The real case is older, being implicit in the work of
Jordan \cite{JordanTwoSubspaces}.
Restricted to the finite-dimensional case, we can think of these as
theorems about two projections in certain finite-dimensional real
$C^{*}$-algebras. One would therefore expect the same result to hold
in all finite-dimensional real $C^{*}$-algebras such as
$\mathbf{M}_{n}(\mathbb{H})$
where $\mathbb{H}$ is the skew field of quaternions.

Notation we will use gives us the needed supply of pairs
of small projections. For $0\leq\theta\leq\pi/2$ define
\[
P_{\theta}=\left[\begin{array}{cc}
1 & 0\\
0 & 0
\end{array}\right],\quad Q_{\theta}=\left[\begin{array}{cc}
\cos^{2}\theta & \cos\theta\sin\theta\\
\cos\theta\sin\theta & \sin^{2}\theta
\end{array}\right].
\]
By the dimension of a projection, we mean its trace, or the dimension
over $\mathbb{A}$ of its range.

\begin{theorem}
\label{thm:JordansLemma}
Suppose $\mathbb{A}$ equals $\mathbb{R}$,
$\mathbb{C}$ or $\mathbb{H}$. Suppose $P$ and $Q$ are projections
in $\mathbf{M}_{n}(\mathbb{A})$. If $\dim(P)\leq\dim Q$ then there
is a unitary $U$ in $\mathbf{M}_{n}(\mathbb{A})$ so that
\[
P=U
\left[\begin{array}{cccccc}
P_{\theta_{1}}\\
 & P_{\theta_{2}}\\
 &  & \ddots\\
 &  &  & P_{\theta_{J}}\\
 &  &  &  & 0I_{r}\\
 &  &  &  &  & 0I_{s}
\end{array}\right]
U^{*}\]
and
\[
Q
=
U
\left[\begin{array}{cccccc}
P_{\theta_{1}}\\
 & P_{\theta_{2}}\\
 &  & \ddots\\
 &  &  & P_{\theta_{J}}\\
 &  &  &  & I_{r}\\
 &  &  &  &  & 0I_{s}
\end{array}
\right]U^{*}
\]
where $r=\dim(P)-\dim(Q)$ and $s=n-r$. Moreover,
$\theta_{1},\dots,\theta_{J}$ and $r$ and $s$ are uniquely
determined, the $\theta_{j}$ up to order.
\end{theorem}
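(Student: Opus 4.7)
My plan is to reduce the simultaneous decomposition of the pair $(P,Q)$ to the spectral theorem applied to the single self-adjoint element $T=(P-Q)^{2}$. A direct computation gives $T=P+Q-PQ-QP$, from which $TP=PT$ (and hence $TQ=QT$) follows by a short algebraic manipulation; moreover $0\le T\le I$, so the spectrum of $T$ lies in $[0,1]$. I will apply the real spectral theorem for $T$ inside $\mathbf{M}_{n}(\mathbb{A})$, which since $T=T^{*}$ has real spectrum works uniformly in $\mathbb{A}=\mathbb{R},\mathbb{C},\mathbb{H}$, to produce an orthogonal direct-sum decomposition $\mathbb{A}^{n}=\bigoplus_{\lambda}V_{\lambda}$ into $\mathbb{A}$-submodules invariant under $P$ and $Q$, with $T=\lambda I$ on $V_{\lambda}$.

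The two boundary eigenspaces I will dispatch by inspection. On $V_{0}$ one has $P=Q$, so diagonalising this common projection yields $(P,Q)=(I,I)$ and $(0,0)$ summands. On $V_{1}$, expanding $(P-Q)^{2}=I$ gives $PQP=0$, hence $QP=0$ and $P+Q=I$, producing $(I,0)$ and $(0,I)$ summands. For an interior eigenvalue $\lambda\in(0,1)$, I write $\lambda=\sin^{2}\theta$; applying $P$ on both sides of $T=\lambda I$ restricted to $V_{\lambda}$ gives $PQP=\cos^{2}\theta\,P$ there. Choosing any unit vector $v\in P(V_{\lambda})$ (necessarily nonzero, else $Q=\lambda I$ on $V_{\lambda}$, contradicting that $Q$ is a projection), I form
\[
\tilde w=\frac{Qv-\cos^{2}\theta\,v}{\cos\theta\sin\theta},
\]
verify from $PQP=\cos^{2}\theta\,P$ that $\tilde w$ is a unit vector orthogonal to $v$ with $P\tilde w=0$ and $Q\tilde w=\cos\theta\sin\theta\,v+\sin^{2}\theta\,\tilde w$, and read off that in the basis $(v,\tilde w)$ the pair $(P,Q)$ acts as $(P_{\theta},Q_{\theta})$. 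The right $\mathbb{A}$-span $v\mathbb{A}\oplus\tilde w\mathbb{A}$ is then a rank-$2$ free $\mathbb{A}$-submodule invariant under both projections, so its orthogonal complement in $V_{\lambda}$ is also invariant, and induction on $\mathbb{A}$-rank completes $V_{\lambda}$.

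To assemble the global decomposition, I will absorb each $(I,I)$ summand of $V_{0}$ together with a $(0,0)$ summand into a $\theta=0$ block, and each $(I,0)$ summand of $V_{1}$ together with a $(0,I)$ summand into a $\theta=\pi/2$ block; the hypothesis $\dim P\le\dim Q$ forces the residue to consist of $r=\dim Q-\dim P$ pure $(0,I)$ summands (forming the $I_{r}$ block in $Q$ and the $0I_{r}$ block in $P$) together with $s$ pure $(0,0)$ summands. Uniqueness of the multiset $\{\theta_{j}\}$ is then clear because the spectrum of $(P-Q)^{2}$ together with its $\mathbb{A}$-multiplicities is a unitary invariant, and $r,s$ are determined by trace and rank data on the extreme eigenspaces. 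The one point I expect to require genuine care is keeping the construction $\mathbb{A}$-linear when $\mathbb{A}=\mathbb{H}$: every scalar that appears (the cosines and sines, and the coefficients in spectral projections of $T$) must be a real number, which is automatic because $T$ has real spectrum and $\langle v,Qv\rangle\in\mathbb{R}$ whenever $Q=Q^{*}$. This observation is essentially the content of the promised shortcut via the universal real $C^{*}$-algebra of two projections, whose irreducible real $*$-representations are the four boundary characters and the two-dimensional real representations $(P_{\theta},Q_{\theta})$.
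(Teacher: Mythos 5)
Your proof is correct in substance, but it takes a genuinely different route from the paper. The paper treats the quaternionic case abstractly: it identifies the universal real $C^{*}$-algebra $\mathcal{B}$ on two projections (via S{\o}rensen's theorem applied to the known complex universal algebra of two projections), notes that every finite-dimensional quotient of $\mathcal{B}$ is a direct sum of copies of $\mathbf{M}_{2}(\mathbb{R})$ and $\mathbb{R}$ coming from point evaluations, and then invokes Giordano's classification of $*$-homomorphisms between finite-dimensional real $C^{*}$-algebras to conclude that, up to unitary equivalence, the resulting embedding into $\mathbf{M}_{n}(\mathbb{H})$ is the obvious block-diagonal one. You instead give the classical hands-on argument: $T=(P-Q)^{2}$ commutes with both projections, the spectral theorem (for $\mathbb{A}=\mathbb{H}$ you should cite the quaternionic version, e.g.\ \cite{LoringQuaternions}) splits $\mathbb{A}^{n}$ into $T$-eigenspaces invariant under $P$ and $Q$, the eigenvalues $0$ and $1$ are dispatched directly, and each interior eigenspace is exhausted by explicit two-dimensional invariant blocks on which $(P,Q)$ acts as $(P_{\theta},Q_{\theta})$; I checked the block computations ($\|\tilde w\|=1$, $\tilde w\perp v$, $P\tilde w=0$, $Q\tilde w=\cos\theta\sin\theta\,v+\sin^{2}\theta\,\tilde w$) and the termination argument, and they are fine, with the reality of all scalars being exactly what makes the construction uniform over $\mathbb{R}$, $\mathbb{C}$, $\mathbb{H}$. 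What each approach buys: yours is elementary and self-contained and produces principal vectors explicitly, which is what the later algorithmic sections actually use; the paper's route is shorter once the machinery is assembled and, more importantly, sets up $\mathcal{B}$ and $\mathcal{B}_{\delta}$ together with naturality and continuity, on which the almost-commuting results of Section 3 rely.

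Two small caveats. Your final assembly pairs each $(1,1)$ summand with a $(0,0)$ summand to form a $\theta=0$ block, which is possible only when $\dim(\operatorname{ran}P\cap\operatorname{ran}Q)\le\dim(\ker P\cap\ker Q)$; for example $P=Q=I$ admits no normal form exactly as printed. This defect really sits in the statement itself (whose formulas $r=\dim(P)-\dim(Q)$ and $s=n-r$ should read $r=\dim(Q)-\dim(P)$ and $s=n-2J-r$), not in your decomposition, but you should flag it rather than silently assume enough $(0,0)$ summands exist. Also, your closing aside slightly misstates the representation theory of $\mathcal{B}$: it has three boundary characters (namely $p=q=1$ at $t=0$, and $(1,0)$, $(0,1)$ at $t=\pi/2$), the $(0,0)$ summands arising only because the maps into $\mathbf{M}_{n}(\mathbb{A})$ need not be unital.
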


The real and complex cases are known. There is a short proof for the
quaternionic case involving universal real $C^{*}$-algebras.  We present
this proof in the next section. 

\begin{definition}
The \emph{principal angles} between the range of $P$ and the range
of $Q$ in $\mathbb{H}^{n}$ are $\theta_{1},\dots,\theta_{J}.$
\end{definition}

We can make sense of ``principle vectors'' if we consider a subspace
of $\mathbb{H}^{n}$ as a subspace of $\mathbb{C}^{2n}$ that is closed
under the anti-unitary symmetry
\[
\mathcal{T}\left(\left[\begin{array}{c}
\mathbf{v}\\
\mathbf{w}
\end{array}\right]\right)=\left[\begin{array}{c}
-\overline{\mathbf{w}}\\
\overline{\mathbf{v}}
\end{array}\right].
\]
In terms of Dyson's three-fold way \cite{DysonThreeFoldWay}, we are
discussing class AII. 
\begin{corollary}
\label{cor:KramersPairs} Suppose $M$ and $N$ are subspaces of $\mathbb{C}^{2n}$
with with $\mathcal{T}(M)=M$ and $\mathcal{T}(N)=N$. Then the principal
angles (see \cite{GalantaiPrincAngles}) have even multiplicity and
the principal vectors can be selected to be pairs of the form
$\mathbf{v},\mathcal{T}\mathbf{v}$.
\end{corollary}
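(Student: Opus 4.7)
The plan is to reduce the corollary to the $\mathbb{A}=\mathbb{H}$ case of Theorem~\ref{thm:JordansLemma}, using the standard identification $\mathbb{H}^n\cong(\mathbb{C}^{2n},\mathcal{T})$ in which right-multiplication by $j\in\mathbb{H}$ corresponds to $\mathcal{T}$. Because $\mathcal{T}$ is antiunitary, it preserves orthogonality, so $\mathcal{T}(M)=M$ forces $\mathcal{T}(M^{\perp})=M^{\perp}$; hence the orthogonal projection $P=P_M$ commutes with $\mathcal{T}$, and likewise $Q=P_N$. Since the $\mathbb{C}$-linear operators on $\mathbb{C}^{2n}$ commuting with $\mathcal{T}$ are exactly the $\mathbb{H}$-linear endomorphisms of $\mathbb{H}^n$, $P$ and $Q$ may be regarded as projections in $\mathbf{M}_n(\mathbb{H})$.

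I would then invoke Theorem~\ref{thm:JordansLemma} to obtain a quaternionic unitary $U$, equivalently a $\mathcal{T}$-commuting complex unitary on $\mathbb{C}^{2n}$, that simultaneously puts $P$ and $Q$ into the stated block-diagonal form, with $2\times 2$ blocks built from $P_{\theta_j}$ and $Q_{\theta_j}$ together with the tail blocks $I_r$ and $0I_s$. Each quaternionic standard basis vector $\mathbf{e}_k\in\mathbb{H}^n$ corresponds, after conjugation by $U$, to the orthonormal Kramers pair $U\mathbf{e}_k,\mathcal{T}U\mathbf{e}_k$ in $\mathbb{C}^{2n}$.

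The key observation is that every entry of $P_{\theta_j}$, $Q_{\theta_j}$, $I_r$, and $0I_s$ is a real number, and real quaternions embed in $\mathbf{M}_2(\mathbb{C})$ as complex scalar matrices. Consequently, on the $4$-dimensional $\mathcal{T}$-invariant complex subspace associated with the $j$-th $2\times 2$ quaternionic block (spanned by $U\mathbf{e}_a,U\mathbf{e}_b,\mathcal{T}U\mathbf{e}_a,\mathcal{T}U\mathbf{e}_b$), each of $P$ and $Q$ splits as an orthogonal direct sum of two identical copies of the corresponding complex $2\times 2$ projection, one on $\mathrm{span}_{\mathbb{C}}(U\mathbf{e}_a,U\mathbf{e}_b)$ and the other on $\mathrm{span}_{\mathbb{C}}(\mathcal{T}U\mathbf{e}_a,\mathcal{T}U\mathbf{e}_b)$. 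Hence each quaternionic principal angle $\theta_j$ appears as a complex principal angle of multiplicity two; the trivial angles contributed by $I_r$ and $0I_s$ double in the same way; and the principal vectors can be selected as the Kramers pairs $U\mathbf{e}_k,\mathcal{T}U\mathbf{e}_k$.

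The only nontrivial step is matching this block picture to the complex principal-angle formalism of \cite{GalantaiPrincAngles}, but that amounts to the routine verification that the singular values of $P_MP_N$ on $\mathbb{C}^{2n}$ decompose blockwise under the simultaneous block-diagonalization above, with real-entry blocks producing complex principal angles of doubled multiplicity.
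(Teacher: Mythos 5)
Your proposal is correct and follows essentially the same route as the paper: identify the $\mathcal{T}$-invariant subspaces with quaternionic subspaces (so the projections lie in $\mathbf{M}_n(\mathbb{H})$), apply Theorem~\ref{thm:JordansLemma} via a quaternionic unitary, which commutes with $\mathcal{T}$ and hence preserves both the symmetry and the principal vectors, and then observe that the real-entried canonical form doubles into two identical complex copies, giving even multiplicity and Kramers pairs $\mathbf{v},\mathcal{T}\mathbf{v}$. The only cosmetic difference is that you organize the doubling blockwise over the $2\times 2$ quaternionic blocks, while the paper doubles globally by writing the complexified projections as $\operatorname{diag}(P,P)$ and $\operatorname{diag}(Q,Q)$.
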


\begin{proof}
If we replace these subspaces by $UM$and $UN$ for some unitary $U$
then $U$ will send principal vectors to principal vectors. If $U$
is symplectic then it will commute with $\mathcal{T}$
(see \cite{LoringQuaternions})
and so the conditions involving $\mathcal{T}$ will be preserved.
Thus we can assume, by Theorem~\ref{thm:JordansLemma}, that $M$
and $N$ are the ranges of $A$ and $B$ where 
\[
A=\left[\begin{array}{cc}
P\\
 & P
\end{array}\right],\quad Q=\left[\begin{array}{cc}
Q\\
 & Q
\end{array}\right]
\]
and
\[
P=\left[\begin{array}{cccccc}
P_{\theta_{1}}\\
 & P_{\theta_{2}}\\
 &  & \ddots\\
 &  &  & P_{\theta_{J}}\\
 &  &  &  & 0I_{r}\\
 &  &  &  &  & 0I_{s}
\end{array}\right],\quad Q=\left[\begin{array}{cccccc}
P_{\theta_{1}}\\
 & P_{\theta_{2}}\\
 &  & \ddots\\
 &  &  & P_{\theta_{J}}\\
 &  &  &  & I_{r}\\
 &  &  &  &  & 0I_{s}
\end{array}\right].
\]
Select the obvious real principal vectors for $P$ and $Q$ and then
double each such $\mathbf{v}$ as
\[
\left[\begin{array}{c}
\mathbf{v}\\
0
\end{array}\right],\left[\begin{array}{c}
0\\
\mathbf{v}
\end{array}\right]=\mathcal{T}\left[\begin{array}{c}
\mathbf{v}\\
0
\end{array}\right].
\]
\end{proof}

\begin{remark}
To unify things, we can regard a subspace of $\mathbb{R}^{n}$ as
a subspace of $\mathbb{C}^{n}$ that is closed under conjugation,
$\mathcal{T}_{+}(\mathbf{v})=\overline{\mathbf{v}}$. For two such
subspaces of $\mathbb{C}^{n}$ we can select principal vectors so
that each such $\mathbf{v}$ satisfies $\mathcal{T}_{+}(\mathbf{v})=\mathbf{v}$.
So in class AI we do not see the ``Kramers pairs'' effect that we
see in class AII, but in both cases principal vectors can be selected
to respect the relevant antiunitary symmetry.
\end{remark}

It is no doubt possible to prove Corollary~\ref{cor:KramersPairs}
directly, and then derive Theorem~\ref{thm:JordansLemma}. However,
the universal real $C^{*}$-algebra does much more than this. It can
be used to prove technical results relevant to real $K$-theory, or,
as we shall see, illuminate an algorithm for dealing with three
relatively easy classes of almost commuting matrices.

\section{A universal real $C^{*}$-algebra}

In this article we interpret ``real $C^{*}$-algebra'' to mean
specifically an $R^{*}$-algebra. A real Banach algebra $A$ with
involution is an $R^{*}$-algebra so long as its norm extends
to the complexification $A_{\mathbb{C}}$ to make that a
$C^{*}$-algebra. One can see \cite{sorensen2012SPandGraphs}
for a precise definition of what is allowed when doing relations on
$R^{*}$-algebras, but it certainly is allowed to say that a generator
$p$ satisfies $p^{2}=p^{*}=p$. For simplicity, we consider only
the case of $x_{1},\dots,x_{n}$ as generators. We say $\mathcal{U}$,
along with $\iota$ mapping $\{x_{1},\dots,x_{n}\}$ into $\mathcal{U}$,
is the universal $R^{*}$-algebra for a set of relations if the following
is true. Given any $R^{*}$-algebra $A$ with $y_{n},\dots,y_{n}$
satisfying those relations, there is a unique $*$-homomorphism
$\varphi:\mathcal{U}\rightarrow A$
so that $\varphi(\iota(x_{j}))=y_{j}$. Colloquially speaking, there
is always exactly one extension of the mapping $x_{j}\mapsto y_{j}$
to a $*$-homomorphism.

The following is very easy, given the machinery in developed by
S\o rensen in \cite{sorensen2012SPandGraphs}. We call it a Theorem only
because so much follows from it that is not so obvious.

\begin{theorem}
The universal $R^{*}$-algebra generated by two elements $p$ and
$q$ subject to the relations $p^{2}=p^{*}=p$ and $q^{2}=q^{*}=q$
is 
\[
\mathcal{B}=
\left\{
f\in C\left([0,\tfrac{\pi}{2}],\mathbf{M}_{2}(\mathbb{R})\right)
\left|
f(0)\in\left[\begin{array}{cc}
\mathbb{R} & 0\\
0 & 0
\end{array}\right]\mbox{ and }f(1)\in\left[\begin{array}{cc}
\mathbb{R} & 0\\
0 & \mathbb{R}
\end{array}\right]
\right.
\right\} 
\]
and the universal generators are $p_{0}$ and $q_{0}$ where 
\begin{equation}
p_{0}(t)=\left[\begin{array}{cc}
1 & 0\\
0 & 0
\end{array}\right],\quad q_{0}(t)=\left[\begin{array}{cc}
\cos^{2}(t) & \sin(t)\cos(t)\\
\sin(t)\cos(t) & \sin^{2}(t)
\end{array}\right].
\label{eq:p_0-and-q_0}
\end{equation}
\end{theorem}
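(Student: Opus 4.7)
The plan is to verify that $(\mathcal{B}, p_0, q_0)$ has the universal property, using the machinery of \cite{sorensen2012SPandGraphs} at the injectivity step.

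First I would check that $p_0$ and $q_0$ are projections lying in $\mathcal{B}$: their endpoint values sit in the prescribed corners, and the projection identities are one-line matrix calculations at each $t \in [0,\pi/2]$. By the universal property of the universal $R^{*}$-algebra $\mathcal{U}$ for two projections, this produces a canonical $*$-homomorphism $\pi : \mathcal{U} \to \mathcal{B}$ sending the universal generators to $p_0, q_0$.

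Second I would show $\pi$ is surjective, i.e.\ that $p_0$ and $q_0$ generate $\mathcal{B}$ as an $R^{*}$-algebra. The key identities
\[
p_0 q_0 p_0 = \cos^2(t)\, p_0, \qquad q_0 - p_0 q_0 - q_0 p_0 + p_0 q_0 p_0 = \sin^2(t) \left[\begin{array}{cc} 0 & 0 \\ 0 & 1 \end{array}\right]
\]
(the latter being the expansion of $(1-p_0) q_0 (1-p_0)$ inside the unitization), combined with Stone--Weierstrass on $[0,1]$, produce every diagonal element of $\mathcal{B}$ compatible with the boundary conditions. The off-diagonal generator $(1-p_0) q_0 p_0 = \cos(t)\sin(t)\, e_{21}$ and its adjoint, multiplied by the diagonal elements just produced, yield every admissible off-diagonal function.

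Third, for injectivity I would invoke the transfer principle of \cite{sorensen2012SPandGraphs}. The complexification
\[
\mathcal{B}_{\mathbb{C}} = \bigl\{ f \in C\bigl([0,\tfrac{\pi}{2}], \mathbf{M}_{2}(\mathbb{C})\bigr) : f(0) \in \left[\begin{array}{cc} \mathbb{C} & 0 \\ 0 & 0 \end{array}\right],\ f(\pi/2) \in \left[\begin{array}{cc} \mathbb{C} & 0 \\ 0 & \mathbb{C} \end{array}\right] \bigr\}
\]
is the classical description of the universal complex $C^{*}$-algebra for two projections, whose irreducible representations are the three characters $(p,q) = (1,1), (1,0), (0,1)$ coming from the endpoints together with the continuous family $(P_\theta, Q_\theta)$ for $\theta \in (0,\pi/2)$. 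Entrywise complex conjugation is an antilinear $*$-automorphism of $\mathcal{B}_{\mathbb{C}}$ that fixes the real-entried generators $p_0, q_0$ and has $\mathcal{B}$ as its fixed-point subalgebra, so \cite{sorensen2012SPandGraphs} identifies $\mathcal{B}$ with $\mathcal{U}$.

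The main obstacle is the classical identification of $\mathcal{B}_{\mathbb{C}}$ as the universal complex $C^{*}$-algebra. This is typically proved via continuous functional calculus on $pqp \in pAp$, whose spectrum lies in $[0,1]$: the angle $\theta = \arccos \sqrt{pqp}$ parameterizes the decomposition, and careful bookkeeping across the four corners $pAp$, $pA(1-p)$, $(1-p)Ap$, $(1-p)A(1-p)$ (together with the boundary characters that appear when $\theta$ reaches an endpoint) assembles the required $*$-homomorphism into any target complex $C^{*}$-algebra. The framework of \cite{sorensen2012SPandGraphs} then transfers the complex conclusion to the $R^{*}$-setting automatically because $p_0, q_0$ have real entries.
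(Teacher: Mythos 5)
Your proposal is correct and follows essentially the same route as the paper: identify the complexification of $\mathcal{B}$ with the known universal complex $C^{*}$-algebra for two projections and then transfer to the real setting via S\o rensen's theorem (Theorem 5.2.6 of \cite{sorensen2012SPandGraphs}), which says the universal $R^{*}$-algebra is the closed real $*$-subalgebra of the complex one generated by $p_{0},q_{0}$. The only difference is that you spell out the generation (surjectivity) step and sketch the classical complex identification, both of which the paper simply asserts or cites.
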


\begin{proof}
The complexification of $\mathcal{B}$ is clearly 
\[
\mathcal{A}=\left\{ f\in C\left([0,1],\mathbf{M}_{2}(\mathbb{C})\right)\left|f(0)\in\left[\begin{array}{cc}
\mathbb{C} & 0\\
0 & 0
\end{array}\right]\mbox{ and }f(1)\in\left[\begin{array}{cc}
\mathbb{C} & 0\\
0 & \mathbb{C}
\end{array}\right]\right.\right\} 
\]
and this is known to be the universal complex $C^{*}$-algebra for
the relations of being two orthogonal projections. For example, see
\cite[\S 3]{PedersenMeasureII}. By \cite[Theorem 5.2.6.]{sorensen2012SPandGraphs},
the universal $R^{*}$-algebra for these relations is the closed real
$*$-algebra in $\mathcal{A}$ generated by $\{p_{0},q_{0}\}$, which
is $\mathcal{B}$.
\end{proof}

\subsection*{Proof of Theorem~\ref{thm:JordansLemma}}
Every finite-dimensional quotient of $\mathcal{B}$ is of the form
\[
\mathcal{C}=\mathbf{M}_{2}(\mathbb{R})\oplus\cdots\oplus\mathbf{M}_{2}(\mathbb{R})\oplus\mathbb{R}\oplus\cdots\oplus\mathbb{R}
\]
with any number of the $\mathbf{M}_{2}(\mathbb{R})$ and up to two
of the $\mathbb{R}$, with the surjection from $\mathcal{B}$ being
evaluation at various $t$ in $[0,1)$ and also 
\[
f\mapsto\left[\begin{array}{cc}
1 & 0\end{array}\right]f(1)\left[\begin{array}{c}
1\\
0
\end{array}\right]
\]
or
\[
f\mapsto\left[\begin{array}{cc}
0 & 1\end{array}\right]f(1)\left[\begin{array}{c}
0\\
1
\end{array}\right].
\]
The $*$-homomorphisms between finite-dimensional $R^{*}$-algebras
are known, say by \cite{GiordRealAF}. Up to unitary equivalence,
the only embedding of $\mathcal{C}$ into $\mathbf{M}_{n}(\mathbb{H})$
is found be the obvious embedding into
\[
\mathcal{D}
=
\mathbf{M}_{2}(\mathbb{H})
\oplus\cdots\oplus
\mathbf{M}_{2}(\mathbb{H})
\oplus
\mathbb{H}
\oplus\cdots
\oplus\mathbb{H}
\]
followed by an embedding that puts the $\mathbf{M}_{k}(\mathbb{H})$
down the diagonal, perhaps with multiplicity in each summand.

\subsection*{Computing principal vectors}
The standard for computing principal angles and vectors is an algorithm
by Bj\"orck and Golub \cite{BjorkGolubAngles}. Let us assume our
subspaces are given as the ranges of projections $P$ and $Q$. Their
algorithm first obtains partial isometries $E$ and $F$ so that $EE^{*}=P$
and $FF^{*}=Q$. Then a singular value decomposition $U\Omega V^{*}$
of $E^{*}F$ is computed, and the principal vectors are found by pairing
each column from $EU$ with a column from $FV$. 

We describe here a different algorithm. We have no particular application
in mind, so do not explore speed or accuracy issues. Moreover, the
algorithm is simpler if it is restricted to the case
$\left\Vert P-Q\right\Vert \leq1/\sqrt{2}$.
We use always the operator norm, so $\left\Vert X\right\Vert $ is
the largest singular value of $X$. See \cite{LoringQuaternions}
for details regarding the norm in the case of a matrix of quaternions. 

Following an idea from \cite{raeburnSincTwoProj}, we let $U$ be
the unitary in the polar decomposition of $X=QP+(I-Q)(I-P)$. We take
an orthonormal basis of eigenvectors for $PQP$, and for each $\mathbf{v}$
in that basis coming from an eigenspace at or above $\tfrac{1}{2}$
we find that $(\mathbf{v},U\mathbf{v})$ is a pair of principal vectors.
Assuming the eigen-decomposition is done with the appropriate symmetry
respected, the result will have the correct symmetry.

This algorithm can be validated, in exact arithmetic, from
Theorem~\ref{thm:JordansLemma}. Notice that the condition
$\left\Vert P-Q\right\Vert \leq1/\sqrt{2}$
causes the $\theta_{j}$to be at most $\tfrac{\pi}{4}$. For each
$P=P_{\theta}$ and $Q=Q_{\theta}$ we note that
\[
X=
\left[\begin{array}{cc}
\cos(\theta) & -\sin(\theta)\\
\sin(\theta) & \cos(\theta)
\end{array}\right]\left[\begin{array}{cc}
\cos(\theta) & 0\\
0 & \cos(\theta)
\end{array}\right]
\]
so
\[
U=\left[\begin{array}{cc}
\cos(\theta) & -\sin(\theta)\\
\sin(\theta) & \cos(\theta)
\end{array}\right]
\]
and the eigenvector for 
\[
PQP=\left[\begin{array}{cc}
\cos^{2}(\theta) & 0\\
0 & 0
\end{array}\right]
\]
for an eigenvalue above one-half will be 
\[
\left[\begin{array}{c}
1\\
0
\end{array}\right]
\]
and this will get paired with
\[
\left[\begin{array}{c}
\cos(\theta)\\
\sin(\theta)
\end{array}\right].
\]

Notice that $X$ will be invertible, and indeed have
$\left\Vert X\right\Vert \leq 1$ and 
$\left\Vert X^{-1}\right\Vert \leq\sqrt{2}$. Thus $U$ can
be quickly and accurately computed by Newton's method
\cite{HighamPolarDecomposition}.  Here is the algorithm
in Matlab, assuming that $p$ and $q$ are the projection matrices.
\begin{quotation}
\texttt{u = q{*}p + (eye(n)-q){*}(eye(n)-p); }

\texttt{for iteration = 1:5 }

\texttt{\quad{}u = (1/2){*}(u + inv(u')); }

\texttt{end }

\texttt{central = p{*}q{*}p; }

\texttt{central = 0.5{*}(central + central'); }

\texttt{{[}v,D{]} = eigs(central, dim); }

\texttt{a = v; }

\texttt{b = u{*}v; }
\end{quotation}
The pairs of principal vectors are in the columns of
\texttt{a} and \texttt{b}. Code that tests this is algorithm is
available as an auxiliary file to the arxiv.org preliminary
version of this paper.

\section{Almost commuting projections}

Almost commuting projections are much easier to understand than almost
commuting hermitian contractions. Indeed, Lin's
theorem \cite{LinAlmostCommutingHermitian}
is sufficiently difficult that there are no algorithms implementing
it. An algorithm for a related problem might be helpful. 

We can easily impose on our universal real $C^{*}$-algebra a
relation that bounds the commutator.
\begin{corollary}
Suppose $0\leq\delta<\tfrac{1}{2}$.
Let $C=\tfrac{1}{2}\arcsin(2\delta)$.
The universal $R^{*}$-algebra generated by two elements $p$ and
$q$ subject to the relations $p^{2}=p^{*}=p$ and $q^{2}=q^{*}=q$
and
\[
\left\Vert pq-qp\right\Vert \leq\delta
\]
is 
\[
\mathcal{B}_{\delta}
=
\left\{
f\in C\left(I_{C},\mathbf{M}_{2}(\mathbb{R})\right)
\left|
f(0)\in\left[\begin{array}{cc}
\mathbb{R} & 0\\
0 & 0
\end{array}\right]\mbox{ and }f(1)\in\left[\begin{array}{cc}
\mathbb{R} & 0\\
0 & \mathbb{R}
\end{array}\right]
\right.
\right\} 
\]
where
$I_{C} =[0,C]\cup[\tfrac{\pi}{2}-C,\tfrac{\pi}{2}]$ 
and the universal generators are $p_{0}$ and $q_{0}$ as
in equation~\ref{eq:p_0-and-q_0}.
\end{corollary}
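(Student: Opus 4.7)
The plan is to quotient the universal $R^{*}$-algebra $\mathcal{B}$ from the preceding theorem by the added commutator bound. Because $\mathcal{B}$ is realised as continuous functions on $[0,\tfrac{\pi}{2}]$, a norm inequality should cut $[0,\tfrac{\pi}{2}]$ down to the closed sublocus on which the inequality holds, and the resulting quotient should be given by restriction of functions. As in the preceding proof, the real case can then be read off from the complex case via \cite[Theorem 5.2.6.]{sorensen2012SPandGraphs}.

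First I would compute the commutator of the universal generators. A direct $2\times 2$ multiplication of the matrices in~\eqref{eq:p_0-and-q_0} gives
\[
p_{0}q_{0}-q_{0}p_{0} \;=\; \frac{\sin(2t)}{2}\left[\begin{array}{cc} 0 & 1 \\ -1 & 0 \end{array}\right],
\]
whose norm at parameter $t$ is $\tfrac{1}{2}\sin(2t)$. Solving $\tfrac{1}{2}\sin(2t)\leq\delta$ on $[0,\tfrac{\pi}{2}]$ yields exactly the closed set $I_{C}$ described in the statement. The natural candidate for the quotient is therefore restriction, $\pi:\mathcal{B}\to\mathcal{B}_{\delta}$, $f\mapsto f|_{I_{C}}$. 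Surjectivity is straightforward because a function in $\mathcal{B}_{\delta}$ can be extended continuously across the gap $(C,\tfrac{\pi}{2}-C)$, and the endpoint conditions at $0$ and $\tfrac{\pi}{2}$ carry over unchanged since both endpoints of $[0,\tfrac{\pi}{2}]$ lie inside $I_{C}$.

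Next I would verify the universal property. Given any $R^{*}$-algebra $A$ containing projections $p,q$ with $\|pq-qp\|\leq\delta$, the preceding theorem produces a $*$-homomorphism $\varphi:\mathcal{B}\to A$ with $\varphi(p_{0})=p$ and $\varphi(q_{0})=q$; the task is to show $\varphi$ annihilates $\ker\pi$, namely the ideal of elements of $\mathcal{B}$ vanishing on $I_{C}$. I would argue by continuous functional calculus applied to the central positive element
\[
c \;=\; (p_{0}q_{0}-q_{0}p_{0})^{*}(p_{0}q_{0}-q_{0}p_{0}) \;=\; \tfrac{\sin^{2}(2t)}{4}\, I_{2}
\]
in $\mathcal{B}$. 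Since $\varphi(c)$ has norm at most $\delta^{2}$, any continuous nonnegative $h$ with $h|_{[0,\delta^{2}]}=0$ satisfies $\varphi(h(c))=0$. Because $c$ is a scalar in each fibre and has zero set exactly $I_{C}$, the elements $h(c)$ together with their left and right $\mathcal{B}$-multiples should generate $\ker\pi$ as a closed two-sided ideal, forcing $\varphi$ to factor through $\pi$.

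The main obstacle I anticipate is the spanning step just above: verifying that the closed two-sided ideal in $\mathcal{B}$ generated by the $h(c)$ really is all of $\ker\pi$ rather than some proper sub-ideal. The essential ingredient is the standard commutative $C^{*}$-fact that the vanishing ideal of a closed subset $Z\subseteq X$ inside $C(X)$ is generated by any single continuous function with zero set $Z$; the matrix-valued nature of $\mathcal{B}$ is harmless because $c$ is a scalar on each fibre, and the endpoint conditions cause no trouble because $0,\tfrac{\pi}{2}\in I_{C}$. A cleaner alternative is to establish the analogous statement for the complexification first, showing that the universal complex $C^{*}$-algebra for two projections with commutator bound $\delta$ is the analogously defined $\mathcal{A}_{\delta}$, and then invoke \cite[Theorem 5.2.6.]{sorensen2012SPandGraphs} verbatim as in the proof of the preceding theorem.
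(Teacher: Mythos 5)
Your proof is correct and is essentially the argument the paper intends: the paper states this corollary without proof (as following ``easily'' from the description of $\mathcal{B}$), and your fibrewise computation $\left\Vert p_{0}q_{0}-q_{0}p_{0}\right\Vert(t)=\tfrac{1}{2}\sin(2t)$, the identification of $I_{C}$ as the locus where this is at most $\delta$, and the quotient-by-restriction plus universal-property argument (or, equivalently, doing the complex case and citing S\o rensen) is exactly the natural way to fill it in. One small wording slip: the zero set of $c$ itself is $\{0,\tfrac{\pi}{2}\}$, not $I_{C}$; the element whose zero set is $I_{C}$, and which generates $\ker\pi$ as a closed ideal, is $h_{0}(c)$ for a nonnegative $h_{0}$ vanishing precisely on $[0,\delta^{2}]$ (e.g.\ $h_{0}(s)=\max(s-\delta^{2},0)$), and your argument goes through verbatim with that element.
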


If $P$ and $Q$ almost commute, and we have candidates $P'$ and
$Q'$ that are commuting projections, we can hope to have minimized
either
\[
\left\Vert P'-P\right\Vert +\left\Vert Q'-Q\right\Vert 
\]
or 
\[
\max\left(\left\Vert P'-P\right\Vert ,\left\Vert Q'-Q\right\Vert \right).
\]
In the first case, we can just let $P'=P$ and set $Q'$ to be the
spectral projection for $[\tfrac{1}{2},\infty)$ of
\[
PQP+(I-P)Q(I-P).
\]
This leads to the well-known result for the sum of displacements,
namely
\[
\left\Vert P'-P\right\Vert +\left\Vert Q'-Q\right\Vert 
=
\sin\left(\frac{1}{2}\arcsin(2x)\right).
\]

Controlling the max of the displacements does not seem to have been
considered before.

We observe that for $0\leq\theta\leq\pi/4$,
\[
\left\Vert P_{\theta}-Q_{\frac{\theta}{2}}\right\Vert 
=
\left\Vert Q_{\theta}-Q_{\frac{\theta}{2}}\right\Vert 
=
\sin\left(\frac{\theta}{2}\right)
\]
while for $\pi/4\leq\theta\leq\pi/2$, we let 
$\theta'=\frac{\theta}{2}+\frac{\pi}{4}$
and observe
\[
\left\Vert P_{\theta}-\left(I-Q_{\theta'}\right)\right\Vert 
=
\left\Vert Q_{\theta}-Q_{\theta'}\right\Vert 
=
\sin\left(\frac{\theta}{2}\right).
\]
For all $\theta$ we find
\[
\left\Vert P_{\theta}Q_{\theta}-Q_{\theta}P_{\theta}\right\Vert 
=
\frac{1}{2}\sin(2\theta).
\]
Finally, when we start with $0$ and $1$ or $0$ and $0$ we just
leave those alone.

\begin{theorem}
\label{thm:makeCommute}
Suppose $\mathbb{A}$ equals $\mathbb{R}$,
$\mathbb{C}$ or $\mathbb{H}$. If $P$ and $Q$ are projections in
$\mathbf{M}_{n}(\mathbb{A})$ then there are projections $P'$ and
$Q'$ in $\mathbf{M}_{n}(\mathbb{A})$ that commute and so that
\[
\left\Vert P-P'\right\Vert 
=
\left\Vert Q-Q'\right\Vert 
=
\sin\left(\frac{1}{4}\arcsin(2\delta)\right)
\]
where
\[
\delta=\left\Vert PQ-QP\right\Vert .
\]
The choice of $P^{'}$ and $Q^{'}$ can be made so that it is continuous
in $P$ and $Q$.
\end{theorem}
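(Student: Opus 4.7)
The plan is to combine Jordan's decomposition (Theorem~\ref{thm:JordansLemma}) with the universal $R^*$-algebra $\mathcal{B}_\delta$ of the preceding corollary.

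First, apply Theorem~\ref{thm:JordansLemma} to simultaneously block-diagonalize $P$ and $Q$ into the $2 \times 2$ blocks $(P_{\theta_j}, Q_{\theta_j})$ together with trivial blocks on which the two projections already commute. Setting $\psi_j = \min(\theta_j, \pi/2 - \theta_j) \in [0, \pi/4]$, in each non-trivial block I would use the explicit commuting replacement supplied by the norm identities immediately before the theorem: take $P'_j = Q'_j = Q_{\theta_j/2}$ when $\theta_j \le \pi/4$, and take the complementary pair $P'_j = Q_{\theta_j/2 - \pi/4}$, $Q'_j = Q_{\theta_j/2 + \pi/4}$ when $\theta_j \ge \pi/4$. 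In either case each coordinate of the block has displacement $\sin(\psi_j/2)$ and the block commutator has norm $\tfrac{1}{2}\sin(2\psi_j)$. Leaving trivial blocks fixed and conjugating back by the Jordan unitary produces a commuting pair $(P', Q')$.

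Next, since the operator norm of a block-diagonal operator is the maximum over blocks, with $\psi_* = \max_j \psi_j$ this yields $\|P - P'\| = \|Q - Q'\| = \sin(\psi_*/2)$ and $\delta = \tfrac{1}{2}\sin(2\psi_*)$. Because $\psi_* \in [0, \pi/4]$, the second relation inverts to $\psi_* = \tfrac{1}{2}\arcsin(2\delta)$, so the first becomes $\sin(\tfrac{1}{4}\arcsin(2\delta))$, matching the theorem.

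For the continuity assertion the block-by-block construction must be recast without reference to the Jordan unitary. I would fix any $\delta_0 < 1/2$, set $C_0 = \tfrac{1}{2}\arcsin(2\delta_0)$, and define a specific commuting projection pair $p', q' \in \mathcal{B}_{\delta_0}$ piecewise:
\[p'(t) = q'(t) = Q_{t/2} \;\text{on}\; [0, C_0], \qquad p'(t) = Q_{t/2 - \pi/4}, \; q'(t) = Q_{t/2 + \pi/4} \;\text{on}\; [\pi/2 - C_0, \pi/2],\]
checking the boundary conditions at $0$ and $\pi/2$. For any $(P, Q)$ with $\|PQ - QP\| \le \delta_0$, the universal property of $\mathcal{B}_{\delta_0}$ supplies a $*$-homomorphism $\varphi_{P,Q}\colon \mathcal{B}_{\delta_0} \to \mathbf{M}_n(\mathbb{A})$ sending $(p_0, q_0)$ to $(P, Q)$, and I set $P' = \varphi_{P,Q}(p'), Q' = \varphi_{P,Q}(q')$. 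Approximating $p', q'$ uniformly by $*$-polynomials in $p_0, q_0$ shows $\varphi_{P,Q}(p'), \varphi_{P,Q}(q')$ depend continuously on $(P, Q)$, and the constructions for varying $\delta_0$ agree on overlaps, giving a continuous selection on $\{\delta < 1/2\}$.

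The main obstacle is that the two branches of the recipe cannot be fused into a single continuous commuting pair across $\theta = \pi/4$ inside the full algebra $\mathcal{B}$: the \emph{equal} and \emph{complementary} cases meet transversely there. Working in $\mathcal{B}_{\delta_0}$ rather than $\mathcal{B}$ sidesteps this precisely because the strict bound $\delta < 1/2$ forces the spectrum of $(P, Q)$ into the disconnected set $I_{C_0}$, which avoids $\pi/4$ and allows the two branches to be specified independently on the two connected components.
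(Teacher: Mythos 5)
Your proposal is correct and follows essentially the same route as the paper: the explicit $2\times 2$ replacements $Q_{\theta/2}$ (equal branch) and the complementary pair near $\theta=\pi/2$, combined with Theorem~\ref{thm:JordansLemma} for the exact norm equalities, and then passage to the universal algebra $\mathcal{B}_{\delta_0}$ with naturality (uniform approximation by $*$-polynomials in $p_0,q_0$) for continuity, which is precisely what the paper's terse proof ``work in $\mathcal{B}_{\delta}$'' intends. You in fact supply details the paper leaves implicit, including the observation that the disconnectedness of $I_{C_0}$ is what permits a continuous commuting choice, and the correct displacement $\sin\bigl(\tfrac{\pi}{4}-\tfrac{\theta}{2}\bigr)$ on the branch $\pi/4\leq\theta\leq\pi/2$.
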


\begin{proof}
We can simply work in $\mathcal{B}_{\delta}$ and use the well-know
fact that naturality in $C^{*}$-algebra constructions leads to
continuity.
\end{proof}

\begin{theorem}
For $\delta=\left\Vert PQ-QP\right\Vert <\tfrac{1}{2},$ the commuting
projections $P'$ and $Q'$ of Theorem~\ref{thm:makeCommute} can
be computed by the following formulas: let
\begin{align*}
R & =\frac{1}{2}\left(PQP+QPQ\right)\\
S & =\frac{1}{2}\left(\left(I-P\right)Q\left(I-P\right)+Q\left(I-P\right)Q\right)\\
T & =PQP+\left(I-P\right)\left(I-Q\right)\left(I-P\right)
\end{align*}
and then let $E_{R}$ and $E_{S}$ be the spectral projections for
$R$ and $S$ corresponding to the set $\left[\tfrac{1}{5},\infty\right)$
and $E_{T}$ the spectral projections for $T$ corresponding to the
set $\left[\tfrac{1}{2},\infty\right),$ and finally 
\begin{align*}
P' & =E_{T}E_{R}E_{T}+(I-E_{T})(I-E_{s})(I-E_{T})\\
Q' & =E_{T}E_{R}E_{T}+(I-E_{T})E_{S}(I-E_{T})
\end{align*}
\end{theorem}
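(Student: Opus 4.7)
The plan is to reduce to the block-diagonal normal form supplied by Theorem~\ref{thm:JordansLemma}. Since $R$, $S$, $T$ are polynomials in $P$ and $Q$, and spectral projections commute with direct sums, the entire construction is compatible with the block decomposition, so it is enough to verify the formula summand by summand. The trivial one-dimensional summands $(0,0)$, $(0,I)$, $(I,0)$, $(I,I)$ are handled by direct substitution: in each the formula returns $P'=P$, $Q'=Q$. The nontrivial $2\times 2$ summands have the form $(P_\theta,Q_\theta)$, and the hypothesis $\|PQ-QP\|<\tfrac12$, combined with $\|P_\theta Q_\theta - Q_\theta P_\theta\|=\tfrac12\sin 2\theta$, confines the angle to $\theta\in I_C=[0,C]\cup[\tfrac{\pi}{2}-C,\tfrac{\pi}{2}]$.

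On a $2\times 2$ summand a direct calculation gives $T=\cos^2(\theta)\,I_2$, so $E_T$ is the identity on the small-angle component $[0,C]$ (where $\cos^2\theta>\tfrac12$) and vanishes on the large-angle component (where $\cos^2\theta<\tfrac12$). Consequently the two terms in each of the formulas are supported on disjoint components: on small-angle summands $P'=Q'=E_R$, and on large-angle summands $P'=I-E_S$ and $Q'=E_S$. In both cases $P'$ and $Q'$ are manifestly projections, and they either coincide or are orthogonal complements, hence they commute.

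Next I diagonalize $R$ and $S$. One finds $R=\tfrac{\cos^2\theta}{2}\left(\begin{smallmatrix}1+\cos^2\theta & \cos\theta\sin\theta\\ \cos\theta\sin\theta & \sin^2\theta\end{smallmatrix}\right)$ with eigenvalues $\tfrac{\cos^2\theta(1\pm\cos\theta)}{2}$, and a symmetric expression for $S$ with eigenvalues $\tfrac{\sin^2\theta(1\pm\sin\theta)}{2}$. In each case the smaller eigenvalue is bounded by $\max_{c\in[0,1]}c^2(1-c)/2=2/27<\tfrac15$, while the larger exceeds $\tfrac15$ on the relevant component. Hence $E_R$ (respectively $E_S$) is the rank-one projection onto the eigenvector for the larger eigenvalue. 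A short calculation identifies those eigenvectors as proportional to $(\cos(\theta/2),\sin(\theta/2))$ and $(\cos\theta',\sin\theta')$ with $\theta'=\theta/2+\pi/4$, so $E_R=Q_{\theta/2}$ on small-angle summands and $E_S=Q_{\theta'}$ on large-angle summands. The formula therefore produces the pairs $(Q_{\theta/2},Q_{\theta/2})$ and $(I-Q_{\theta'},Q_{\theta'})$ already shown above to give displacement $\sin(\theta/2)\le \sin(C/2)=\sin(\tfrac14\arcsin(2\delta))$, which matches Theorem~\ref{thm:makeCommute}.

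The main obstacle is the bookkeeping needed to confirm that the thresholds $\tfrac15$ and $\tfrac12$ robustly separate the spectra of $R$, $S$, $T$ on each component, so the spectral projections are unambiguous and the formula truly yields commuting projections. The bound $2/27<\tfrac15$ handles the ``small'' eigenvalues of $R$ and $S$ uniformly, and the sharp split of $\cos^2\theta$ across $\tfrac12$ on the two pieces of $I_C$ controls $E_T$. The saving grace is that the potentially ``wrong'' value of $E_R$ on the large-angle component and of $E_S$ on the small-angle component is annihilated by the complementary factor $I-E_T$ or $E_T$, which is what lets a single closed-form expression work uniformly across all of $I_C$.
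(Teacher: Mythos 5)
Your proposal is correct and follows essentially the same route as the paper: reduce to the $2\times 2$ summands of Theorem~\ref{thm:JordansLemma} (the construction being blockwise since $R,S,T$ are polynomials in $P,Q$), compute eigenvalues and eigenvectors there, use $2/27<\tfrac15$ and the split of $\cos^2\theta$ across $\tfrac12$ to pin down $E_R=Q_{\theta/2}$, $E_S=Q_{\theta'}$ and $E_T$, and note that $E_T$, $I-E_T$ kill the irrelevant spectral projection on each component; in fact your computation $T=\cos^2(\theta)I_2$, forcing $E_T=I$ or $E_T=0$ blockwise, is the one consistent with the stated formula (the paper's displayed $t=\mathrm{diag}(\cos^{2}x,\sin^{2}x)$ and its threshold at $\pi/2$ are slips, and with that $t$ the products $E_TE_RE_T$ would not even be projections). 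The one blemish, inherited from the paper's own display, is the final displacement on the large-angle summands: for the pair $\left(I-Q_{\theta'},Q_{\theta'}\right)$ with $\theta\in[\tfrac{\pi}{2}-C,\tfrac{\pi}{2}]$ the displacement is $\sin\left(\tfrac{\pi/2-\theta}{2}\right)=\sin\left(\tfrac{\pi}{4}-\tfrac{\theta}{2}\right)$, not $\sin(\theta/2)$, and with this correction each nontrivial block has displacement exactly $\sin\left(\tfrac14\arcsin(2\delta_j)\right)$ for its own commutator norm $\delta_j$, so maximizing over blocks gives the equality asserted in Theorem~\ref{thm:makeCommute} rather than only the upper bound you state.
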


\begin{proof}
We notice that for
\[
p=\left[\begin{array}{cc}
1 & 0\\
0 & 0
\end{array}\right],\quad q=\left[\begin{array}{cc}
\cos^{2}(x) & \sin(x)\cos(x)\\
\sin(x)\cos(x) & \sin^{2}(x)
\end{array}\right],
\]
and with $0\leq x\leq\tfrac{\pi}{4}$, if we set
\[
r=\frac{1}{2}\left(pqp+qpq\right)
\]
then
\[
r=\left[\begin{array}{cc}
\cos\left(\frac{x}{2}\right) & \sin\left(\frac{x}{2}\right)\\
\sin\left(\frac{x}{2}\right) & -\cos\left(\frac{x}{2}\right)
\end{array}\right]\left[\begin{array}{cc}
\lambda_{1}(x) & 0\\
0 & \lambda_{2}(x)
\end{array}\right]\left[\begin{array}{cc}
\cos\left(\frac{x}{2}\right) & \sin\left(\frac{x}{2}\right)\\
\sin\left(\frac{x}{2}\right) & -\cos\left(\frac{x}{2}\right)
\end{array}\right]
\]
where
\[
\lambda_{1}(x)=\cos^{2}(x)\left(\frac{1}{2}\cos(x)+\frac{1}{2}\right)
\]
and 
\[
\lambda_{2}(x)=-\cos^{2}(x)\left(\frac{1}{2}\cos(x)-\frac{1}{2}\right).
\]

Suppose $e_{r}$ is the spectral projection of $r$
for $\left[\frac{1}{5},\infty\right)$.  Since
\[
\lambda_{2}(x)\leq\lambda_{2}(\tfrac{\pi}{4})
=
\frac{2-\sqrt{2}}{8}\leq\frac{1}{5}
\leq
\frac{2+\sqrt{2}}{8}
\leq
\lambda_{2}(\tfrac{\pi}{4})
\leq
\lambda_{2}(x)
\]
we find that 
\begin{figure}
\includegraphics[bb=81bp 240bp 529bp 544bp,clip,scale=0.65]{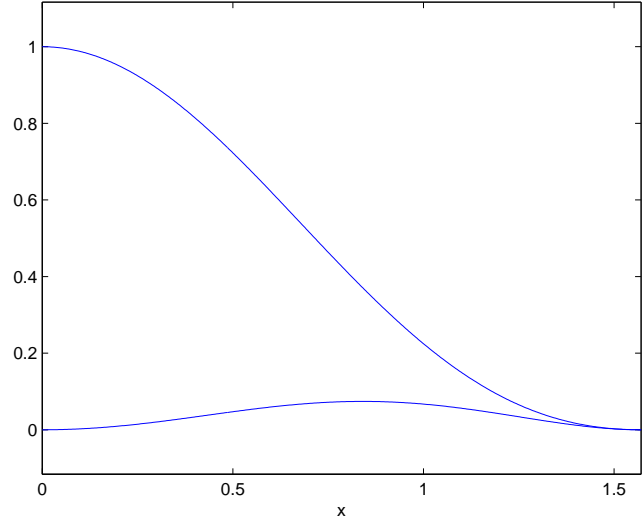}
\caption{The two eigenvalues of $r(x)$ for various scalar values of $x$. 
\label{fig:The-two-eigenvalues}}
\end{figure}
\[
e_{r}=\left[
\begin{array}{cc}
\cos^{2}\left(\frac{x}{2}\right) & \sin\left(\frac{x}{2}\right)\cos\left(\frac{x}{2}\right)\\
\sin\left(\frac{x}{2}\right)\cos\left(\frac{x}{2}\right) & \sin^{2}\left(\frac{x}{2}\right)
\end{array}
\right]
\]
which is on the midpoint of the canonical path between $p$ and $q$.
(See \cite{BrownMetricSubspaces}.) By symmetry, set
\[
s=\frac{1}{2}\left((1-p)q(1-p)+q(1-p)q\right)
\]
and find that the spectral projection $e_{s}$ 
of $s$ for $\left[\frac{1}{5},\infty\right)$ satisfies
\[
e_{s}=\left[
\begin{array}{cc}
\cos^{2}\left(\frac{\pi}{2}-\frac{x}{2}\right) & \sin\left(\frac{x}{2}\right)\cos\left(\frac{x}{2}\right)\\
\sin\left(\frac{x}{2}\right)\cos\left(\frac{x}{2}\right) & \sin^{2}\left(\frac{x}{2}\right)
\end{array}
\right],
\]
and so for $\frac{\pi}{4}\leq x\leq\frac{\pi}{2}$, and this is the
``midpoint'' between $1-p$ and $q$. These projections do not become
zero when $x$ is in the opposite subinterval, as 
indicated by Figure~\ref{fig:The-two-eigenvalues}.

\begin{figure}
\includegraphics[bb=96bp 240bp 517bp 544bp,clip,scale=0.65]{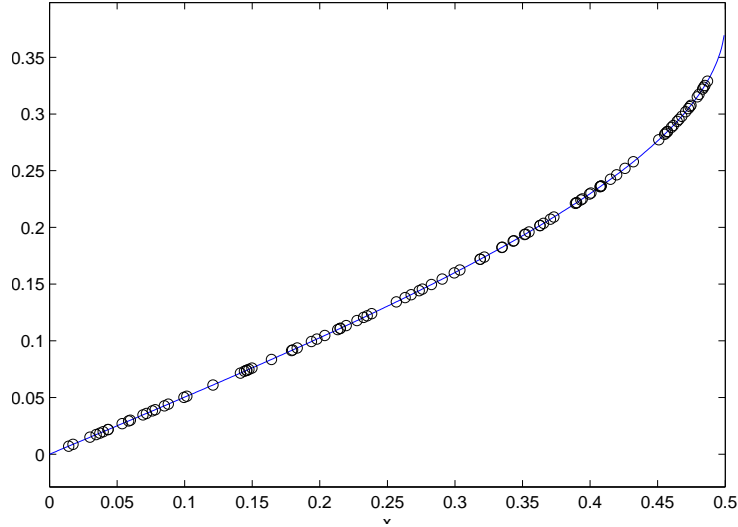}
\caption{Distance to computed commuting projections by the Formulas in 
implemented in Matlab. There were 100 pairs of 200-by-200 real
projections of distance at most 0.49 apart. The blue line is
the exact answer of
$\sin(\arcsin(2x)/4)$.
\label{fig:Distance-to-computed}
}
\end{figure}

For all $x$ we use
\[
t=pqp+(1-p)(1-q)(1-p)
\]
which is
\[
t=\left[\begin{array}{cc}
\cos^{2}(x) & 0\\
0 & \sin^{2}(x)
\end{array}\right].
\]
Thus the spectral projection $e_{t}$ for $t$ 
corresponding to $ $$\left[\frac{1}{2},\infty\right)$
is 
\[
\left[\begin{array}{cc}
1 & 0\\
0 & 0
\end{array}\right]
\]
for $x$ less than $\frac{\pi}{2}$ and 
\[
\left[\begin{array}{cc}
0 & 0\\
0 & 1
\end{array}\right]
\]
for $x$ greater than $\frac{\pi}{2}$. 

\begin{figure}
(a)\includegraphics[bb=96bp 240bp 517bp 564bp,clip,scale=0.65]{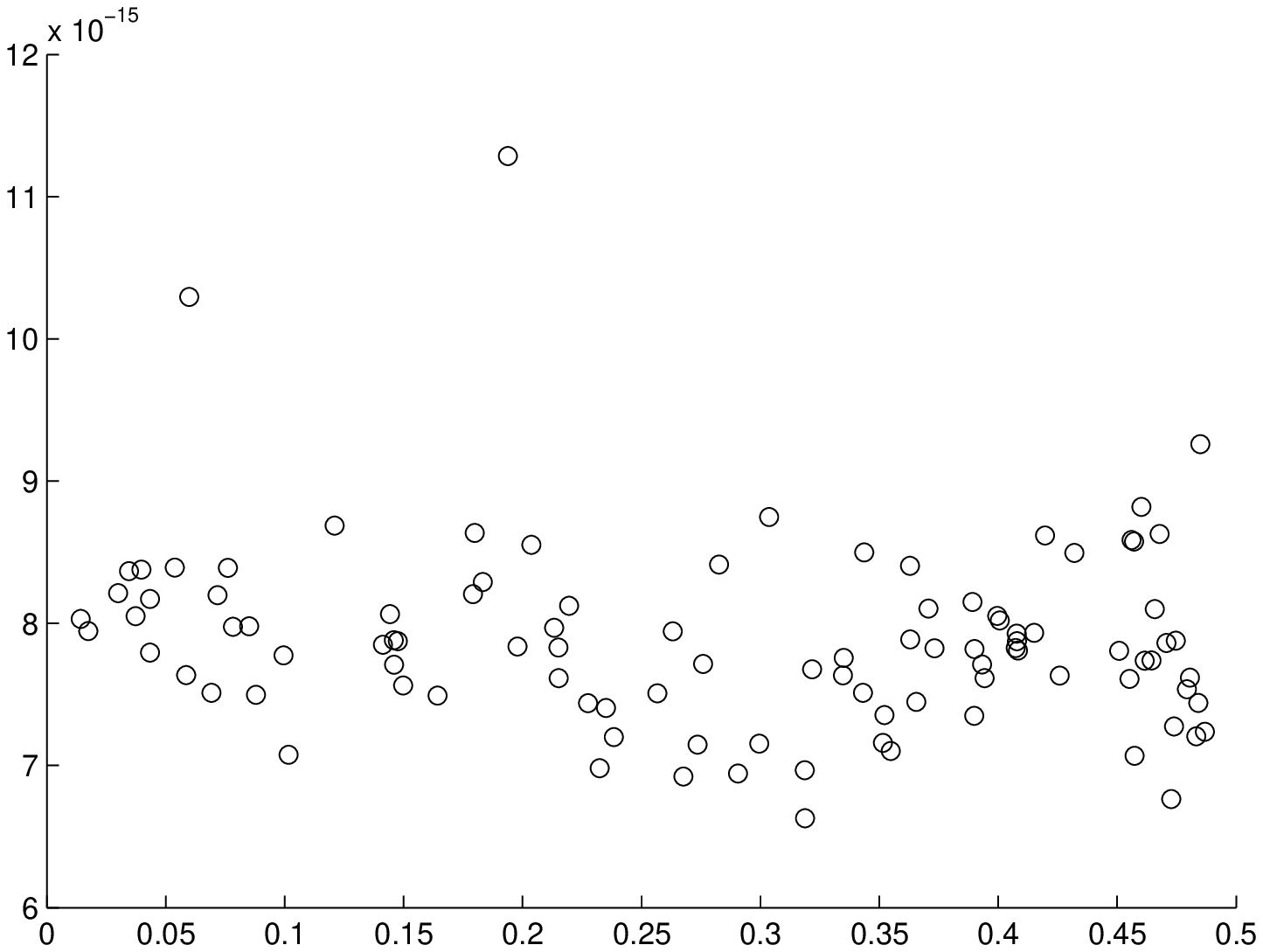}

(b)\includegraphics[bb=96bp 240bp 517bp 564bp,clip,scale=0.65]{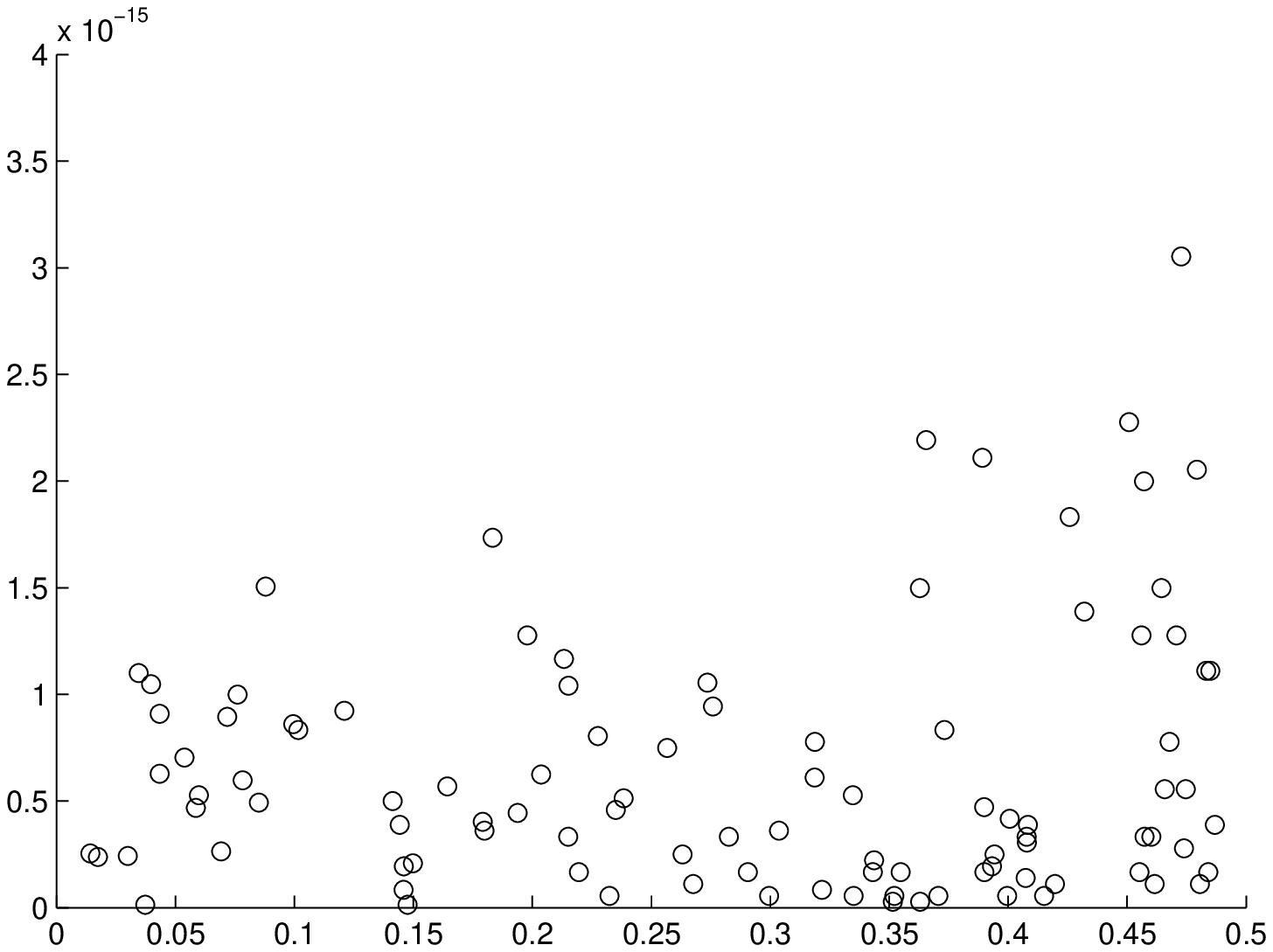}

\caption{The errors with the same test matrices as in 
Figure~\ref{fig:Distance-to-computed}.
(a) The sum of errors, in operator norm, in the relations
$P^{'2}=P^{'}$,$Q^{'2}=Q^{'}$,$P^{'*}=P^{'}$,$Q^{'*}=Q^{'}$ and
$P^{'}Q^{'}=Q^{'}P^{'}.$ (b) The errors from the optimal
in 
$\max\left(\left\Vert P^{'}-P\right\Vert ,\left\Vert Q^{'}-Q\right\Vert \right)$.
\label{fig:Errors-in-output.}
}
\end{figure}

If we define $r$ and $s$ and $t$ by the above formulas and set
\begin{align*}
p_{1} & =e_{t}e_{r}e_{t}+(1-e_{t})(1-e_{s})(1-e_{t})\\
q_{1} & =e_{t}e_{r}e_{t}+(1-e_{t})s(1-e_{t})
\end{align*}
and find these are exactly commuting projections and
\[
\max
\left(
\left
\Vert p-p_{1}\right\Vert,
\left\Vert q-q_{1}\right\Vert \right)=\sin\left(\frac{x}{2}
\right).
\]
Since
\[
\delta=\left\Vert pq-qp\right\Vert =\sin(x)\cos(x)=\frac{1}{2}\sin(2x)
\]
we have
\[
\max
\left(
\left\Vert p-p_{1}\right\Vert ,\left\Vert q-q_{1}\right\Vert 
\right)
=
\sin\left(\frac{1}{4}\arcsin(2\delta)\right).
\]
\end{proof}

Some applications of the half-angle formula give us
\[
\sin\left(\frac{1}{4}\arcsin(2\delta)\right)
=
\sqrt{\frac{1}{2}-\frac{1}{2}\sqrt{\frac{1}{2}+\frac{1}{2}\sqrt{1-4\delta^{2}}}}
\]
should someone think this is an improvement.

This is readily programmable for complex matrices, and can be done
so real and quaternionic matrices lead to real and quaternionic matrices
during the calculation. Code that tests this is available as an auxiliary
file to the arxiv.org preliminary version of this paper. The results
on real matrices is shown in Figure~\ref{fig:Distance-to-computed}
with numerical errors shown in Figure~\ref{fig:Errors-in-output.}.
The data as shown were created with \texttt{testCommute(200,100)} using
the code in the auxiliary file \texttt{testCommute.m}.

{\bf Acknowledgement.} This work was partially
supported by a grant from the Simons Foundation (208723 to Loring).

\bibliographystyle{amsplain}

\begin{thebibliography}{10}

\bibitem{BjorkGolubAngles}
{\.A}ke Bj{\"o}rck and Gene~H. Golub, \emph{Numerical methods for computing
  angles between linear subspaces}, Math. Comp. \textbf{27} (1973), 579--594.


\bibitem{BrownMetricSubspaces}
Lawrence~G. Brown, \emph{The rectifiable metric on the set of closed subspaces
  of {H}ilbert space}, Trans. Amer. Math. Soc. \textbf{337} (1993), no.~1,
  279--289. 

\bibitem{DixmierTwoSubspaces}
Jacques Dixmier, \emph{Position relative de deux vari\'et\'es lin\'eaires
  ferm\'ees dans un espace de {H}ilbert}, Revue Sci. \textbf{86} (1948),
  387--399. 

\bibitem{DysonThreeFoldWay}
Freeman~J. Dyson, \emph{Statistical theory of the energy levels of complex
  systems. {I}}, J. Mathematical Phys. \textbf{3} (1962), 140--156.

\bibitem{GalantaiPrincAngles}
A.~Gal{\'a}ntai and Cs.~J. Heged{\H{u}}s, \emph{Jordan's principal angles in
  complex vector spaces}, Numer. Linear Algebra Appl. \textbf{13} (2006),
  no.~7, 589--598. 

\bibitem{GiordRealAF}
T.~Giordano, \emph{A classification of approximately finite real
  {$C^*$}-algebras}, J. Reine Angew. Math. \textbf{385} (1988), 161--194.
  

\bibitem{HighamPolarDecomposition}
Nicholas~J. Higham, \emph{Computing the polar decomposition---with
  applications}, SIAM J. Sci. Statist. Comput. \textbf{7} (1986), no.~4,
  1160--1174. 

\bibitem{JordanTwoSubspaces}
Camille Jordan, \emph{Essai sur la g\'eom\'etrie \`a $n$ dimensions}, Bull.
  Soc. Math. France \textbf{3}, 103--174.

\bibitem{LinAlmostCommutingHermitian}
Huaxin Lin, \emph{Almost commuting selfadjoint matrices and applications},
  Operator algebras and their applications ({W}aterloo, {ON}, 1994/1995),
  Fields Inst. Commun., vol.~13, Amer. Math. Soc., Providence, RI, 1997,
  pp.~193--233. 

\bibitem{LoringQuaternions}
Terry~A. Loring, \emph{Factorization of matrices of quaternions}, Exposition.
  Math. \textbf{30} (2012), no.~3, 250--267.

\bibitem{PedersenMeasureII}
Gert~Kjaerg{\.a}rd Pedersen, \emph{Measure theory for {$C^{\ast}$}-algebras.
  {II}}, Math. Scand. \textbf{22} (1968), 63--74. 

\bibitem{raeburnSincTwoProj}
Iain Raeburn and Allan~M Sinclair, \emph{The {$C^{\ast}$}-algebra generated by two
  projections}, School of Mathematics, University of New South Wales, 1989.

\bibitem{sorensen2012SPandGraphs}
A.P.W. S{\o}rensen, \emph{Semiprojectivity and the geometry of graphs}, Ph.D.
  thesis, University of Copenhagen, 2012, {w}ww.math.ku.dk/noter/filer.


\end{thebibliography}

\end{document}